\renewcommand{\mathcal}{\mathscr}
\theoremstyle{definition}
\newtheorem{ntn}{Notation}
\newtheorem{dfn}[ntn]{Definition}
\theoremstyle{plain}
\newtheorem{lem}[ntn]{Lemma}
\newtheorem{prp}[ntn]{Proposition}
\newtheorem{thm}[ntn]{Theorem}
\theoremstyle{remark}
\newtheorem{que}[ntn]{Question}
\newtheorem{rmk}[ntn]{Remark}
\numberwithin{equation}{section}
\newcommand{\ideal}[1]{{\left\langle#1\right\rangle}}
\newcommand{\into}{\hookrightarrow}
\newcommand{\onto}{\twoheadrightarrow}
\newcommand{\p}{\partial}
\newcommand{\ol}{\overline}
\newcommand{\wt}{\widetilde}
\renewcommand{\AA}{\mathfrak{A}}
\renewcommand{\aa}{\mathfrak{a}}
\newcommand{\CC}{\mathds{C}}
\newcommand{\C}{\mathcal{C}}
\newcommand{\dd}{\mathfrak{d}}
\newcommand{\DD}{\mathfrak{D}}
\newcommand{\J}{\mathcal{J}}
\newcommand{\fl}{\mathfrak{l}}
\newcommand{\M}{\mathcal{M}}
\renewcommand{\O}{\mathcal{O}}
\newcommand{\QQ}{\mathds{Q}}
\renewcommand{\ss}{\mathfrak{s}}
\renewcommand{\sl}{\mathfrak{sl}}
\newcommand{\gl}{\mathfrak{gl}}
\newcommand{\mm}{\mathfrak{m}}
\DeclareMathOperator{\Ann}{Ann}
\DeclareMathOperator{\Der}{Der}
\begin{document}

\title[Free divisors with normal crossings]{Quasihomogeneous free divisors with only normal crossings in codimension one}

\author[X.~Liao]{Xia Liao}
\address{X.~Liao\\
KIAS\\
85 Hoegiro, Dongdaemun-gu\\
Seoul 130-722\\
Republic of Korea}
\email{\href{mailto:liao@kias.re.kr}{liao@kias.re.kr}}

\author[M.~Schulze]{Mathias Schulze}
\address{M.~Schulze\\
Department of Mathematics\\
University of Kaiserslautern\\
67663 Kaiserslautern\\
Germany}
\email{\href{mailto:mschulze@mathematik.uni-kl.de}{mschulze@mathematik.uni-kl.de}}

\thanks{The research leading to these results has received funding from the People Programme (Marie Curie Actions) of the European Union's Seventh Framework Programme (FP7/2007-2013) under REA grant agreement n\textsuperscript{o} PCIG12-GA-2012-334355.}


\subjclass[2010]{Primary 32S25; Secondary 16W25}

\keywords{Free divisor, logarithmic derivation, normal crossing, quasihomogeneity, Lie algebra}

\begin{abstract}
We prove that any divisor as in the title must be normal crossing.
\end{abstract}

\maketitle
\tableofcontents

\section*{Introduction}

Free divisors are maximally singular reduced complex hypersurfaces defined locally as a determinant of a so-called Saito matrix (see \cite[\S1]{Sai80}).
They occur classically as discriminants in the base space of versal deformations of isolated complete intersection singularities, where the Saito matrix represents a basis of liftable vector fields (see \cite[(3.16) Cor.]{Loo84}).
While there is an abundance of specific situations in which certain hypersurfaces are free divisors, it is unclear under what geometric conditions a general hypersurface can be a free divisor.
Since free divisors have purely one-codimensional singular locus, it is natural to ask what singularities can occur in a free divisor in codimension one.
The simplest case to consider is that of free divisors with only normal crossings in codimension one.
However, besides normal crossing divisors themselves, no such objects are known.
This raises the following

\begin{que}[{Faber~\cite[Que.~5]{Fab15}}]\label{41}
Is a free divisor normal crossing if it is normal crossing in codimension one?
\end{que}

In a particular case this question was answered positively.

\begin{thm}[{Granger--Schulze~\cite[Thm.~1.8]{GS14}}]\label{39}
A free divisor germ with smooth normalization is normal crossing if and only if it is normal crossing in codimension one.
\end{thm}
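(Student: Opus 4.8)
The ``only if'' direction is trivial, so I would assume $D$ is free, has smooth normalization, and is normal crossing in codimension one, and prove $D$ normal crossing at each point $p\in D$. The plan is to work through the module of \emph{logarithmic residues} $\R_D$, the image of the residue map $\res\colon\Omega^1_X(\log D)\to\bigoplus_i\mathcal{M}_{D_i}$ into the meromorphic functions on the irreducible components $D_i$ of $D$, using two logarithmic-residue criteria which are the technical core and which I would establish first: that $D$ is normal crossing iff $\R_D=\widetilde\O_D$, and that $D$ is normal crossing in codimension one iff $\R_D\subseteq\widetilde\O_D$, where $\widetilde\O_D$ denotes the normalization of $\O_D$. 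Granted these, the rest is homological. By the second criterion $\R_D\subseteq\widetilde\O_D$, so $Q:=\widetilde\O_D/\R_D$ is a finite $\O_D$-module, supported inside $\Sing D$ since $\O_D$, $\R_D$ and $\widetilde\O_D$ agree off $\Sing D$.

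First I would bound $\dim Q$. As $D$ is free, its singular locus has pure codimension one in $D$ (Aleksandrov, Terao), so $\dim\Sing D=n-2$ with $n=\dim X$; and at the generic point of any component of $\Sing D$ the divisor is a normal crossing by hypothesis, so $\R_D=\widetilde\O_D$ there by the first criterion. Hence $Q$ vanishes at all generic points of $\Sing D$ and $\dim Q\le n-3$. Next I would show $\R_D$ and $\widetilde\O_D$ are both maximal Cohen--Macaulay $\O_D$-modules. Freeness of $D$ makes $\Der(-\log D)$, hence its dual $\Omega^1_X(\log D)$, free of rank $n$; since $\res$ has kernel exactly $\Omega^1_X$, the module $\R_D\cong\Omega^1_X(\log D)/\Omega^1_X$ is the cokernel of an $\O_X$-linear injection between free modules of equal rank $n$, given in bases by a square matrix $M$ with $\det M$ a unit times $f$ (because $\Omega^n_X(\log D)=f^{-1}\Omega^n_X$); so $\R_D=\Coker M$ is the cokernel of a matrix factorization of $f$, hence maximal Cohen--Macaulay over $\O_D$. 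Smoothness of the normalization makes $\widetilde\O_D$ a regular ring of dimension $n-1$ finite over the $(n-1)$-dimensional ring $\O_D$, hence maximal Cohen--Macaulay over $\O_D$ as well. The depth lemma applied to $0\to\R_D\to\widetilde\O_D\to Q\to 0$ then yields $\depth_{\O_D}Q\ge n-2$, which together with $\depth Q\le\dim Q\le n-3$ forces $Q=0$. Thus $\R_D=\widetilde\O_D$, and the first criterion gives that $D$ is normal crossing.

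The main obstacle is not this homological step but the two logarithmic-residue criteria underlying it, above all the implication $\R_D=\widetilde\O_D\Rightarrow D$ normal crossing. The branch idempotents of $\widetilde\O_D$ are automatically residues (of the forms $df_i/f_i$), so the equality $\R_D=\widetilde\O_D$ really asserts that \emph{every} weakly holomorphic function on each branch is a logarithmic residue; distilling a local coordinate system from such an abundance of ``splitting'' logarithmic forms is the real content, and it is also precisely here that smoothness of the normalization is used essentially — which is why Question~\ref{41} remains open without it.
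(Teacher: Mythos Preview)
This theorem is quoted from \cite[Thm.~1.8]{GS11} and is not proved in the present paper; there is no proof here to compare your proposal against.

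As a standalone attempt, your proposal is an outline that defers the essential step. You reduce the claim to two residue criteria and a homological argument, then acknowledge in your last paragraph that Criterion~1 (the implication $\R_D=\widetilde{\O}_D\Rightarrow D$ normal crossing, under smooth normalization) is ``the real content''---and you do not prove it. But that implication \emph{is} the theorem. By \cite[Thm.~1.1]{GS11}, invoked in this paper just before~\eqref{9}, normal crossing in codimension one already yields the \emph{equality} $\omega^0_D=\pi_*\O_{\tilde D}$, not merely the inclusion you state, and without any hypothesis on the normalization. Hence your Criterion~2 is weaker than what is actually available, your depth argument (correct in itself) recovers nothing new since the quotient $Q$ vanishes from the outset, and what remains is precisely the passage from $\R_D=\widetilde{\O}_D$ with smooth $\tilde D$ to $D$ normal crossing---the actual substance of \cite[Thm.~1.8]{GS11}, which your proposal does not address beyond a remark about ``distilling a local coordinate system.''

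Note also that Criterion~1, as you first state it without the smooth-normalization hypothesis, cannot be a general fact: combined with \cite[Thm.~1.1]{GS11} it would answer Question~\ref{41} affirmatively for all free divisors.
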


For free hyperplane arrangements the statement can be verified directly by induction on the codimension using the minimal number of generators of logarithmic derivations along generic arrangements determined by Yuzvinsky~\cite{Yuz91}.
As our main result, we shall replace the condition on the normalization in Theorem~\ref{39} by quasihomogeneity.

\begin{thm}\label{0}
A quasihomogeneous free divisor germ is normal crossing if and only if it is normal crossing in codimension one.
\end{thm}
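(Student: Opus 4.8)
The direction from normal crossing to normal crossing in codimension one being trivial, assume the latter and let us deduce the former. By Theorem~\ref{39} it is enough to prove that the normalization $\pi\colon\wt D\to D$ has smooth total space. Quasihomogeneity lets us take a defining equation $f$ homogeneous for positive integer weights; then $\O_D$, and hence also its integral closure $\O_{\wt D}$, is positively graded, so $D$ and $\wt D$ are cones under a contracting $\CC^*$-action, and $\Sing\wt D$ is a closed $\CC^*$-stable subset. Consequently $\wt D$ is smooth once it is smooth at the finitely many points of $\pi^{-1}(0)$. On the other hand, the hypothesis says that $D$ is a normal crossing divisor outside a closed subset $Z\subseteq\Sing D$ of codimension $\geq2$ in $D$; over $D\setminus Z$ the map $\pi$ is a disjoint union of isomorphisms onto the (smooth) local sheets, so $\wt D$ is already smooth there and $\Sing\wt D\subseteq\pi^{-1}(Z)$, which is conical of codimension $\geq2$ in $\wt D$. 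Thus the whole problem collapses to smoothness of $\wt D$ over the origin.

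Next I would use freeness. By the Aleksandrov--Terao criterion, $D$ free means that the singular scheme $\Sigma$ of $D$ (cut out in $\O_D$ by the partials of $f$) is Cohen--Macaulay of dimension $n-2$; in particular $\Sigma$ is pure of that dimension with no embedded components, so its associated points are just the generic points of the codimension-one components of $\Sing D$. Each such component is not contained in $Z$ for dimension reasons, so at its generic point $D$ is normal crossing and $\Sigma$ is there reduced, of the model form $\{x_i=x_j=0\}$. Being Cohen--Macaulay and generically reduced, $\Sigma$ is reduced. Hence $\Sing D$ is a reduced Cohen--Macaulay cone, pure of codimension one in $D$, carrying generically along each component the structure of an ordinary double point.

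The remaining, and I expect hardest, step is to propagate this generic picture to the vertex, and here quasihomogeneity must be used in an essential way. The natural vehicle is the logarithmic residue: one always has $\O_D\subseteq\R_D\subseteq\wt{\O}_D$, over a free divisor the quotients $\R_D/\O_D$ and $\wt{\O}_D/\O_D$ are Cohen--Macaulay of pure dimension $n-2$ like $\Sigma$, and the computation above shows $\R_D=\wt{\O}_D$ in codimension one; using the grading and the Euler derivation $\chi$, which by Saito's criterion splits $\Der(-\log D)=\O\cdot\chi\oplus\bigoplus_{i=2}^{n}\O\cdot\delta_i$ into homogeneous pieces, one should be able to rule out the quotient $\wt{\O}_D/\R_D$ acquiring length concentrated over $0$ and conclude $\R_D=\wt{\O}_D$; by the residue characterization of normal crossing divisors this gives that $D$ is normal crossing, in particular $\wt D$ smooth. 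The difficulty is exactly this vertex analysis: controlling the finite part of $\wt{\O}_D/\R_D$ over $0$ from the Cohen--Macaulayness supplied by freeness together with the conical, Lie-algebraic structure supplied by quasihomogeneity --- freeness alone does not suffice, which is why Faber's Question~\ref{41} is open in general --- presumably by studying the finite-dimensional Lie algebra of degree-zero logarithmic derivations and its action near the origin so as to exclude any completion of $(D,0)$ other than a coordinate normal crossing.
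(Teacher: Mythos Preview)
Your opening reductions are sound: invoking Theorem~\ref{39} to reduce to smoothness of $\wt D$, using the cone structure to localize at the origin, and using Aleksandrov--Terao to see that the Jacobian scheme is reduced are all correct (the last is exactly \cite[Thm.~1.6]{GS11}).  The trouble is the ``hardest step'' you propose.  By \cite[Thm.~1.1]{GS11}, the equality $\R_D=\wt{\O}_D$ is \emph{equivalent} to the hypothesis that $D$ be normal crossing in codimension one; it is not something still to be established, and it does not by itself force $D$ to be normal crossing (that implication is precisely Question~\ref{41}).  So the residue argument you sketch is circular: you would be re-proving the hypothesis and then asserting the conclusion without justification.  The vague appeal at the end to ``the finite-dimensional Lie algebra of degree-zero logarithmic derivations'' points in the right direction, but nothing in your outline explains why that Lie algebra is finite-dimensional or how it forces smoothness.

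The paper's route is quite different and supplies exactly the missing mechanism.  First one reduces to $D$ irreducible (Proposition~\ref{42}).  The key new ingredient is Proposition~\ref{34}: for a free divisor normal crossing in codimension one, the submaximal minors of the Saito matrix generate the Jacobian ideal.  A degree count using this identity and Saito's criterion then shows (Proposition~\ref{20}) that a $\chi$-homogeneous basis $\delta_1,\dots,\delta_{n+1}$ of $\Der(-\log D)$ has \emph{all} $\chi$-degrees $\le 0$; this is what makes the Lie algebra $\dd$ they generate finite-dimensional.  The proof is then completed by representation theory: repeatedly perturbing $\chi$ by the Cartan element of an $\sl_2$-triple inside $\aa_0$ shrinks the space $\mm'$ of lowest-weight variables until the induced representation $\aa'$ is solvable (Proposition~\ref{33}), whereupon Lie's theorem produces a common eigenvector $x_1$, hence $x_1\mid f$ by Saito's criterion (Proposition~\ref{31}).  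For irreducible $D$ this forces $D$ smooth.  None of this Lie-theoretic machinery is visible in your outline; the Cohen--Macaulay/residue framework you set up is useful background but does not reach the conclusion.
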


A classical example of a hypersurface singularity that is quasihomogeneous and normal crossing in codimension one is the Whitney umbrella (see Figure~\ref{59}).
As a consequence of Theorem~\ref{0} it can not be a free divisor.

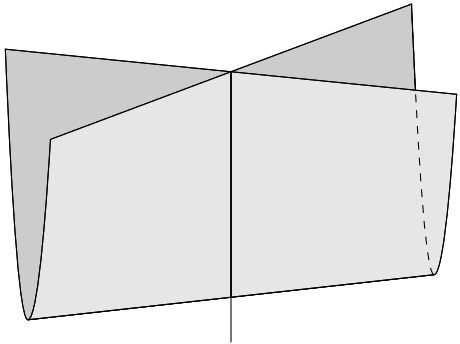
\begin{figure}[h]
\begin{tikzpicture}[scale=0.3]
\draw[fill=black!20,] (9,1) parabola (8,13) -- (0,10) -- (0,0) --(9,1);
\draw[fill=black!20] (-9,-1) parabola (-10,11) -- (0,10) -- (0,0) --(-9,-1);
\draw (9,1) parabola (8,13) -- (0,10) -- (0,0) --(9,1);
\draw[] (-9,-1) parabola (-10,11) -- (0,10) -- (0,0) --(-9,-1);
\draw[fill=black!10] (9,1) parabola (10,9) -- (0,10) -- (0,0) --(9,1);
\draw[fill=black!10] (-9,-1) parabola (-8,7) -- (0,10) -- (0,0) --(-9,-1);
\draw[dashed] (9,1) parabola (8,13);
\draw (9,1) parabola (10,9) -- (0,10) -- (0,0) --(9,1);
\draw (-9,-1) parabola (-8,7) -- (0,10) -- (0,0) --(-9,-1);
\draw (0,0) -- (0,-2);
\end{tikzpicture}

\caption{A real picture of the Whitney umbrella defined by $x^2-y^2z$.}\label{59}
\end{figure}

Although we do not know how to remove it, the additional homogeneity hypothesis is supported by a result of Granger--Schulze~\cite[Thm.~1.1]{GS14} and Faber~\cite[Rmk.~11]{Fab15}: 
free divisor germs which are normal crossing in codimension one are Euler homogeneous.
By definition a reduced hypersurface singularity is Euler homogeneous if it admits a defining equation that is fixed by a derivation.
For the stronger notion of quasihomogeneity a diagonalizable derivation with positive eigenvalues is needed (see \eqref{14}).
The two notions of homogeneity coincide in case of isolated hypersurface singularities (see \cite{Sai71}).
Free divisors with isolated singularities are plane curves and Question~\ref{41} becomes vacuous in this case.

There are many examples of highly reducible free divisors such as free hyperplane arrangements (see for example \cite[\S4]{OT92}) or free discriminants in prehomogeneous vector spaces (see \cite{BM06,GMS11}).
On the other hand, it is empirically known that irreducible free divisors are rare objects (see \cite{ST14}).
Our result supports this observation by deducing reducibility from local reducibility in codimension one (under the stated additional hypotheses).

The question of non-existence of negative degree derivations on quasihomogeneous isolated complete intersection singularities occurs in conjectures of S.~Halperin (see \cite[\S39]{FHT01}, \cite{Hau02}) and J.~Wahl (see \cite[Conj.~1.4]{Wah83}, \cite[p.~475, Thm.]{Ale85}).
While quasihomogeneous free divisors may admit negative degree logarithmic derivations, a related extreme plays a role in our present context:
For a quasihomogeneous free divisor germ $D$, we consider the (finite dimensional) complex Lie algebra $\dd$ (see \eqref{62}) generated by all weighted homogeneous logarithmic derivations along $D$ of non-positive degree.
As a crucial ingredient of our approach, we show that, after reduction to the case where $D$  is irreducible and unsuspended, the hypotheses of Theorem~\ref{0} imply that $\dd$ generates the module of all logarithmic derivations along $D$ (see Proposition~\ref{20}).
We achieve the proof of the theorem by studying the Lie algebra representation of $\dd$ on the space of lowest weight variables (see \S\ref{17}).

We shall now give a more detailed summary of our approach to proving Theorem~\ref{0}:
The statement can be reduced to the irreducible unsuspended case (see Proposition~\ref{42}, Remark~\ref{45} and Lemmas~\ref{47} and \ref{49}).

In \S\ref{22}, we consider a free divisor germ $D$ which is normal crossing in codimension one.
Combining results from Granger--Schulze~\cite{GS14} and Mond--Pellikaan~\cite{MP89}, we prove that the ideal of submaximal minors of a Saito matrix coincides with the Jacobian ideal (see Proposition~\ref{34}).

In \S\ref{27}, we assume in addition that $D$ is quasihomogeneous.
Then a degree argument using Saito's criterion shows that there is a weighted homogeneous basis of logarithmic derivations of non-positive degree (see Proposition~\ref{20}).

In \S\ref{51} and \S\ref{17}, we consider the representation $\dd'$ of the Lie algebra $\dd$ on the vector space $\mm'$ of lowest weight variables.
This representation is visible as a block in a Saito matrix (see \eqref{30}).
In case $\dd'$ is solvable, Lie's theorem and Saito's criterion enforce smoothness of $D$ (see Proposition~\ref{31}).
In case $\dd'$ is not solvable, we describe a change of coordinates and weights that makes the dimension of $\mm'$ drop.
Iterating this process, we arrive at a solvable $\dd'$ returning to the previous case (see Proposition~\ref{33}).

\subsection*{Acknowledgments}

The first author would like to thank the Department of Mathematics at
the University of Kaiserslautern for providing a pleasant working
environment during his postdoctoral stay.

\section{Normal crossings in codimension one}\label{22}

Let $X:=(\CC^{n+1},0)$ be the germ of complex $(n+1)$-space and let $D$ be the germ of a reduced hypersurface in $X$ with ideal generated by a function $f\in\O_X$. 
We shall use the term divisor as a synonym for a reduced hypersurface.
The Jacobian ideal of $D$ is defined as the ideal
\[
\J_D:=\ideal{\frac{\p f}{\p x_1},\dots,\frac{\p f}{\p x_{n+1}}}_{\O_D}
\]
generated by the partial derivatives of $f$ with respect to coordinates $x_1,\dots,x_{n+1}$ of $X$. 
Recall that there are two exact sequences of $\O_X$-modules:
\begin{gather}
\xymatrix{
0 \ar[r] &  \Der_X(-\log D) \ar[r] & \Der_\CC(\O_X) \ar[r] & \J_D \ar[r] &  0,
}\label{13}\\
\xymatrix{
0 \ar[r] &  \Omega^p_X \ar[r] &  \Omega^p_X(\log D) \ar[r]^-{\rho_D^p} & \omega^{p-1}_D \ar[r] &  0.
}\label{12}
\end{gather}
In \eqref{12}, $\omega^{p-1}_D$ is the image of $\Omega_X^p(\log D)$ in $\M_D\otimes_{\O_D}\Omega^{p-1}_D$ under the Saito's logarithmic residue map $\rho_D^p$ (see \cite[\S2]{Sai80}), where $\M_D$ is the ring of meromorphic functions on $D$. 
Aleksandrov~\cite[\S4, Thm.]{Ale90} proved that $\omega^p_D$ agrees with the module of $p$th regular differential forms on $D$ (see \cite{Bar78}). 
For further properties of the logarithmic residue map, we refer to \cite{GS14}.

Let us assume that $D$ is normal crossing in codimension one which means that, outside of an analytic subset codimension at least two, a representative of $D$ is locally isomorphic to a normal crossing divisor.
Then we know by \cite[Thm ~1.1]{GS14}, that $\omega^0_{D}$ can be identified with the ring of weakly holomorphic functions on $D$. 
Denoting by $\pi\colon\tilde D\to D$ the normalization map,
this ring, in turn, can be identified with $\pi_*\O_{\tilde D}$ where $\O_{\tilde D}=\wt{\O_D}$ is the integral closure of $\O_D$ in $\M_D$. 
Combining these facts, turns \eqref{12} for $p=1$ into a short exact sequence
\begin{equation}\label{9}
\xymatrix{
0 \ar[r] &  \Omega^1_X \ar[r] &  \Omega^1_X(\log D) \ar[r]^-{\rho_D^1} &  \pi_*\O_{\tilde D} \ar[r] &  0
}
\end{equation}
whenever $D$ is normal crossing in codimension one. 
The dual of the inclusion map in \eqref{9} is the inclusion map in \eqref{13}.
Note $\pi_*\O_{\tilde D}$ is finite as $\O_D$-module and hence also as $\O_X$-module. 

Let us further assume that $D$ is a free divisor germ. 
This means that $\Omega^1_X(\log D)$ is a locally free sheaf of rank $n+1=\dim X$. 
Since \eqref{9} is a free $\O_X$-resolution of $\pi_*\O_{\tilde D}$ of length one, $\pi_*\O_{\tilde D}$ is a Cohen--Macaulay $\O_X$-module by the Auslander--Buchsbaum formula. 
Since $\pi$ is finite it follows that $\tilde D$ is Cohen--Macaulay.
Then \cite[Proof of Thm.~3.4]{MP89} applies to show that the first Fitting ideal of $\pi_*\O_{\tilde D}$ equals the preimage $\C'_D:=\Ann_{\O_X}(\pi_*\O_{\tilde D}/\O_D)$ in $\O_X$ of the conductor ideal
\[
\C_D:=\Ann_{\O_D}(\pi_*\O_{\tilde D}/\O_D).
\] 
By \cite[Thm.~1.6]{GS14} (or \cite[Proof of Thm.~3.4]{MP89}), $\C_D=\J_D$ and hence $\C'_D$ equals the preimage $\J'_D:=\ideal{f,\frac{\p f}{\p x_1},\dots,\frac{\p f}{\p x_{n+1}}}_{\O_X}$ in $\O_X$ of the Jacobian ideal $\J_D$.

\begin{rmk}\label{48}
Suppose that $D$ is a free divisor germ.
For dual bases $\delta_1,\dots,\delta_{n+1}$ and $\omega_1,\dots,\omega_{n+1}$ of $\Der_X(-\log D)$ and $\Omega_X^1(\log D)$ there are expansions 
\begin{equation}
\delta_j=\sum_{i=1}^{n+1}a_{i,j}\frac{\p}{\p x_i},\quad dx_i=\sum_{j=1}^{n+1}\lambda_{j,i}\omega_i,\quad a_{i,j}=\lambda_{j,i}\in\O_X.\label{44}
\end{equation}
A matrix $A:=(a_{i,j})$ arising is this way is called a Saito matrix and represents the inclusion map in \eqref{13}.
The inclusion in \eqref{9} is represented by its transpose $\Lambda:=(\lambda_{i,j})=A^t$.
Saito's freeness criterion (see \cite[(1.8) Thm.~ii)]{Sai80}) states that
\begin{equation}\label{56}
f:=\det(A)=\det(\Lambda)\in\O_X
\end{equation}
generates the ideal of $D$.
\end{rmk}

The preceding discussion proves the following

\begin{prp}\label{34}
Let $D$ be a free divisor germ in $X$ which is normal crossing in codimension one. 
Then the submaximal minors of any Saito matrix $A$ generate the preimage $\J'_D$ in $\O_{X}$ of the Jacobian ideal $\J_D$ of $D$.\qed
\end{prp}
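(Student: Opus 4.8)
The statement to prove is Proposition~\ref{34}: that for a free divisor germ $D$ which is normal crossing in codimension one, the submaximal ($n\times n$) minors of any Saito matrix $A$ generate the preimage $\J'_D \subset \O_X$ of the Jacobian ideal. The plan is to read this off from the identification of Fitting ideals already assembled in the text. The key observation is that the submaximal minors of $A$ are, up to sign, precisely the entries of the adjugate matrix $\mathrm{adj}(A)$, and since $A$ represents the inclusion in \eqref{13} while its transpose $\Lambda = A^t$ represents the inclusion in \eqref{9}, the submaximal minors of $A$ coincide with those of $\Lambda$. So it suffices to identify the ideal generated by the $n \times n$ minors of $\Lambda$.

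The route I would take is through Fitting ideals. The short exact sequence \eqref{9} exhibits $\Lambda$ (an $(n+1)\times(n+1)$ matrix over $\O_X$) as a presentation matrix of the $\O_X$-module $\pi_*\O_{\tilde D}$: the map $\Omega^1_X \hookrightarrow \Omega^1_X(\log D)$ with cokernel $\pi_*\O_{\tilde D}$ is given by $\Lambda$ in the chosen bases. Therefore the $k$th Fitting ideal $\mathrm{Fitt}_k(\pi_*\O_{\tilde D})$ is generated by the $(n+1-k)\times(n+1-k)$ minors of $\Lambda$. In particular the first Fitting ideal $\mathrm{Fitt}_1(\pi_*\O_{\tilde D})$ is generated by the $n\times n$ minors of $\Lambda$, i.e.\ by the submaximal minors of $\Lambda$, hence of $A$. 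The preceding discussion has already established, via \cite[Proof of Thm.~3.4]{MP89} (applicable because $\pi_*\O_{\tilde D}$ is Cohen--Macaulay, from \eqref{9} and Auslander--Buchsbaum) together with $\C_D = \J_D$ from \cite[Thm.~1.6]{GS11}, that this first Fitting ideal equals $\C'_D = \J'_D$. Chaining these two identifications gives the claim.

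Concretely the proof is just: submaximal minors of $A$ $=$ submaximal minors of $\Lambda$ $=$ generators of $\mathrm{Fitt}_1(\pi_*\O_{\tilde D})$ $=$ $\C'_D$ $=$ $\J'_D$, where the first equality is transposition, the second is the definition of Fitting ideal applied to the presentation \eqref{9}, and the last two are the content of the paragraph preceding Remark~\ref{48}. Since every link in this chain has already been supplied in the excerpt, there is essentially nothing left to do — which is exactly why the text says ``the preceding discussion proves the following'' and the statement is marked \qed. The only point requiring the slightest care is the bookkeeping that the cokernel presentation $\O_X^{n+1} \xrightarrow{\Lambda} \O_X^{n+1} \to \pi_*\O_{\tilde D} \to 0$ genuinely computes Fitting ideals in the sense that makes $\mathrm{Fitt}_1$ the ideal of $n\times n$ minors, and that freeness of $D$ is what guarantees $\Omega^1_X(\log D)$ is a free module of rank $n+1$ so that $\Lambda$ is genuinely a square matrix over $\O_X$; both are immediate from the setup. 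I do not anticipate any real obstacle: this proposition is a formal consequence of the machinery already in place, and the substantive inputs (the identification of $\omega^0_D$ with $\pi_*\O_{\tilde D}$, Cohen--Macaulayness, and $\C_D = \J_D$) are all imported as cited theorems.
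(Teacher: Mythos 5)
Your proposal is correct and follows exactly the route the paper intends: the chain (submaximal minors of $A$ = minors of $\Lambda$ = generators of the first Fitting ideal of $\pi_*\O_{\tilde D}$ = $\C'_D$ = $\J'_D$) is precisely the ``preceding discussion'' that the paper cites as the proof, using the same inputs from Granger--Schulze and Mond--Pellikaan together with Cohen--Macaulayness via Auslander--Buchsbaum. Nothing is missing.
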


The following result justifies that we may assume that $D$ is irreducible.

\begin{prp}\label{42}
The combination of freeness and normal crossing in codimension one descends to all irreducible components of a divisor germ.
\end{prp}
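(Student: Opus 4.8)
The plan is to prove Proposition~\ref{42} by a local factorization argument. Write $D = D_1 \cup D_2$ as a germ, where $D_1, D_2$ have no common component (it suffices to treat the two-component case and induct). Choose reduced defining equations $f = f_1 f_2$ with $f_i$ defining $D_i$. The first thing I would establish is that normal crossing in codimension one is inherited: if $D$ is locally a normal crossing divisor at a generic point $p$ of a component of $\Sing D$, then near $p$ the components $D_1$ and $D_2$ are each unions of some of the smooth branches, hence each $D_i$ is itself normal crossing near $p$; and a generic point of $\Sing D_i$ either lies in $\Sing D$ (handled) or lies on a single smooth branch of $D_i$ that meets the other component $D_{3-i}$ transversally, so again $D_i$ is normal crossing there. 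So the content is really in descending \emph{freeness}.

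For freeness I would use the dual formulation via logarithmic differentials and the conductor. By Proposition~\ref{34} (applicable since $D$ is free and normal crossing in codimension one), a Saito matrix $A$ for $D$ has submaximal minors generating $\J'_D$, equivalently the Fitting ideal computation shows $\pi_* \O_{\tilde D}$ is Cohen--Macaulay of projective dimension one over $\O_X$ via the resolution \eqref{9}. Since $D = D_1 \sqcup_{\text{gen}} D_2$ with the two components sharing no component, the normalization satisfies $\tilde D = \tilde D_1 \amalg \tilde D_2$ and hence $\pi_* \O_{\tilde D} = \pi_{1*}\O_{\tilde D_1} \oplus \pi_{2*}\O_{\tilde D_2}$ as $\O_X$-modules. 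A direct summand of a Cohen--Macaulay module is Cohen--Macaulay, so each $\pi_{i*}\O_{\tilde D_i}$ is Cohen--Macaulay; being a torsion $\O_X$-module supported on the hypersurface $D_i$, it has depth $n$ and hence (Auslander--Buchsbaum) projective dimension one over $\O_X$. Thus each $\pi_{i*}\O_{\tilde D_i}$ admits a length-one free resolution $0 \to \O_X^{r_i} \to \O_X^{s_i} \to \pi_{i*}\O_{\tilde D_i} \to 0$; counting ranks and using that the cokernel is torsion forces $r_i = s_i = n+1$ (the total must add up to the rank $n+1$ resolution of $\pi_*\O_{\tilde D}$, and one checks $s_i = n+1$ since the map $\O_X \to \pi_{i*}\O_{\tilde D_i}$ part of the structure sheaf forces at least one generator while the hypersurface support pins it down — more carefully, dualize). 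Reinterpreting such a resolution: its dual $0 \to \O_X^{n+1} \to \O_X^{n+1} \to \Ext^1(\pi_{i*}\O_{\tilde D_i}, \O_X) \to 0$ exhibits, after identifying the middle term with $\Omega^1_X$ via the standard generator and using that the map is generically an isomorphism with determinant a power-free equation, precisely the shape of \eqref{13} for $D_i$ with a Saito matrix; so $\Der_X(-\log D_i)$ is free of rank $n+1$, i.e. $D_i$ is free.

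The step I expect to be the main obstacle is pinning down the identification in that last paragraph: going from ``$\pi_{i*}\O_{\tilde D_i}$ has a length-one free resolution'' back to ``$D_i$ is a free divisor'' requires recognizing that the resolution is (up to the canonical identifications) the sequence \eqref{9} for $D_i$, i.e. that the map $\O_X^{n+1} \to \O_X^{n+1}$ is a Saito matrix of $D_i$ whose determinant is (a unit times) $f_i$. For this I would argue: $\Ext^1_{\O_X}(\pi_{i*}\O_{\tilde D_i}, \O_X)$ is a maximal Cohen--Macaulay $\O_{D_i}$-module of rank one that is reflexive, hence (as $\O_{D_i}$ is already known to be a hypersurface whose normalization data we understand) isomorphic to an ideal; tracking through the Mond--Pellikaan conductor identification \cite{MP89} as in \S\ref{22}, this $\Ext$ module is isomorphic to $\Omega^1_{X}(\log D_i)/\Omega^1_X$, which by \eqref{12} is $\omega^0_{D_i}$. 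The key point is then that $\omega^0_{D_i} = \pi_{i*}\O_{\tilde D_i}$ again — but this is exactly \cite[Thm.~1.1]{GS11} applied to $D_i$, which we may invoke since $D_i$ is normal crossing in codimension one by the first paragraph. Once the module on the right of the resolution is identified with $\pi_{i*}\O_{\tilde D_i} = \omega^0_{D_i}$, the resolution \emph{is} \eqref{9} for $D_i$, its transpose is \eqref{13} for $D_i$, and freeness of $D_i$ follows. An alternative, possibly cleaner route avoiding the recognition problem: show directly that $\Der_X(-\log D) = \Der_X(-\log D_1) \cap \Der_X(-\log D_2)$ and that $\Der_X(-\log D_i)/\Der_X(-\log D)$ embeds into $\J'_{D_{3-i}}/\J'_D$-type data, then use that a direct-sum decomposition of the resolved module splits the logarithmic derivation module; I would try the $\Ext$/conductor route first as it dovetails with the machinery already set up in \S\ref{22}.
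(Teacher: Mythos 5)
Your reduction of everything to the direct summand $\pi_{i*}\O_{\tilde D_i}$ of $\pi_*\O_{\tilde D}$ is where the argument breaks. What you actually extract from the splitting is: (i) $D_i$ is normal crossing in codimension one (your first paragraph, which is fine), and (ii) $\pi_{i*}\O_{\tilde D_i}$ is a Cohen--Macaulay $\O_X$-module, hence of projective dimension one. But these two facts do \emph{not} imply that $D_i$ is free, so no amount of care in "recognizing" the length-one resolution as \eqref{9} for $D_i$ can close the argument. Concretely, take $D_i$ to be the quadric cone $x^2+y^2+z^2=0$ in $(\CC^3,0)$: it is normal, so it is (vacuously) normal crossing in codimension one and $\pi_{i*}\O_{\tilde D_i}=\O_{D_i}$ is Cohen--Macaulay of projective dimension one over $\O_X$, with $\omega^0_{D_i}=\pi_{i*}\O_{\tilde D_i}$ exactly as in your identification; yet the cone is not a free divisor (its singular locus has codimension two). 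This also shows where the formal step fails: from the residue sequence $0\to\Omega^1_X\to\Omega^1_X(\log D_i)\to\omega^0_{D_i}\to0$ with $\pdim\omega^0_{D_i}=1$ one only gets $\pdim\Omega^1_X(\log D_i)\le1$ (depth lemma / long exact sequence), not freeness; Schanuel-type comparison does not apply because the middle term of the residue sequence is not known to be projective. Moreover your count $r_i=s_i=n+1$ is false in general (a smooth component has $\pi_{i*}\O_{\tilde D_i}=\O_{D_i}$ with a rank-one resolution), so the minimal resolution of the summand is not the sequence \eqref{9} for $D_i$ even when $D_i$ is free. The real content of the proposition is precisely the interaction between the components, and that information is discarded the moment you split $\pi_*\O_{\tilde D}$ into summands and treat them separately.

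For comparison, the paper's proof is a two-step citation that keeps this coupling: for a free divisor, normal crossing in codimension one is equivalent to the Jacobian ideal $\J_f$ being radical \cite[Thm.~1.6, Rmk.~1.7]{GS11}, and by Faber \cite[2.2.(i)]{Fab12} the combination "free with radical Jacobian ideal'' descends to all irreducible components; applying the first equivalence again to each (now free) component recovers normal crossing in codimension one for it. If you want a self-contained argument along your lines, you would have to prove a statement of Faber's type yourself, e.g.\ by working with $\Der_X(-\log D)=\Der_X(-\log D_1)\cap\Der_X(-\log D_2)$ and the defining equations $f=f_1f_2$ directly, rather than with the normalization modules of the individual components.
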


\begin{proof}
For any free divisor germ $D$, the normal crossing hypothesis is equivalent to the ideal
\begin{equation}\label{61}
\J_f:=\ideal{\frac{\p f}{\p x_1},\dots,\frac{\p f}{\p x_{n+1}}}_{\O_X}
\end{equation}
generated by the partial derivatives of $f$ being radical by \cite[Thm.~1.6, Rmk.~1.7]{GS14}.
By \cite[\S 2.2.(i)]{Fab15}, the combination of freeness and radical $\J_f$ descends to all irreducible components of $D$.
\end{proof}

\section{Suspension of divisors}\label{29}

For a degree argument in Proposition~\ref{20}, we shall need the following 

\begin{lem}\label{43}
Let $D$ be an irreducible free divisor germ satisfying 
\begin{equation}\label{18}
\Der_X(-\log D)\subset\mm_X\Der_\CC(\O_X)
\end{equation}
and let $\Lambda$ be a transposed Saito matrix as in Remark~\ref{48}.
Denote by $M_{i,j}$ the $(n\times n)$-minor obtained from $\Lambda$ by deleting the $i$th row and $j$th column.
Then $M_{i,j}\ne0$ for all $i,j=1,\dots,n+1$.
\end{lem}

\begin{proof}
With notation as in Remark~\ref{48}, let $g_i:=\rho_D^1(\omega_i)$ be the image of $\omega_i$ under the residue map in \eqref{9}.
By Cramer's rule (see \cite[Lem.~3.3]{MP89}), we have
\[
M_{i,j}g_k=\pm M_{k,j}g_i
\]
in $\O_{\tilde D}$ which is a domain by irreducibility of $D$.
Assuming that $M_{i,j}=0$ for some $i,j\in\{1,\dots,n+1\}$, this implies that $g_i=0$ or $M_{k,j}=0$ for all $k=1,\dots,n+1$.
In the first case, $\omega_i\in\Omega_X^1$ by the residue sequence \eqref{9}.
By duality and hypothesis~\eqref{18} this gives the contradiction $1=\ideal{\delta_i,\omega_i}\in\mm_X$.
Therefore, we must have $M_{k,j}=0$ in $\O_{\tilde D}$ and hence in $\O_D$ for all $k=1,\dots,n+1$.
In other words, $f$ divides $M_{k,j}$ in $\O_X$ for all $k=1,\dots,n+1$.
In terms of \eqref{44}, hypothesis~\eqref{18} means that $\lambda_{j,i}\in\mm_X$.
Thus, Saito's criterion (see Remark~\ref{48}) leads to the contradiction
\[
f=\det(\Lambda)=\sum_{k=1}^{n+1}\pm\lambda_{k,j}M_{k,j}\in\mm_Xf.
\]
\end{proof}

\begin{rmk}\label{45}
The following construction will serve to reduce to a case where condition~\eqref{18} is satisfied (see Lemma~\ref{47}).
By \cite[(3.6) Proof]{Sai80}, there is a product structure
\begin{equation}\label{46}
(D,X)\cong (D'\times X'',X'\times X''),
\end{equation}
where $X'':=(\CC^r,0)$ for some $r\in\{0,\dots,n\}$, such that the divisor germ $D'$ in $X':=(\CC^{n+1-r},0)$ satisfies condition~\eqref{18}.
In particular, $D$ is smooth if and only if $r=n$.
An $X'$ as above can be chosen as any (coordinate) subspace complementary to
\[
\Der_X(-\log D)(0)=T_0X''.
\]
By the cancellation lemma for analytic spaces (see \cite[Lem.~1.5]{Eph78}), all such $D'$ are isomorphic.
We say that $D$ is a suspension of $D'$, or suspended, if $r>0$.

Whenever $D$ and $D'$ are related as in \eqref{46}, we have
\begin{equation}\label{54}
\Der_X(-\log D)=\O_X\Der_{X'}(-\log D')\oplus\O_X\Der_\CC(\O_{X''}).
\end{equation}
For the dual statement see \cite[Lem.~2.2.(iv)]{CMN96}.
\end{rmk}

The following lemma summarizes properties of suspension.

\begin{lem}\label{47}
Let $D$ be a suspension of $D'$ as in \eqref{46}.
\begin{enumerate}[(a)]
\item\label{47a} Suspension defines a bijection between the sets of irreducible components of $D$ and $D'$.
\item\label{47b} The following properties hold equivalently for $D$ and $D'$: smoothness of corresponding irreducible components, being normal crossing (in codimension one) and freeness.
\end{enumerate}
\end{lem}

\begin{proof}
The statement on freeness in \eqref{47b} follows from the equality \eqref{54}.
The proof of the remaining statements is left to the reader.
\end{proof}

For any $f\in\O_X$ generating the ideal of $D$ we denote the module of (logarithmic) derivations annihilating $f$ by
\begin{equation}\label{55}
\Der_X(-\log f):=\{\delta\in\Der_\CC(\O_X)\mid\delta(f)=0\}\subset\Der_X(-\log D).
\end{equation}
It is isomorphic to the syzygy module of the ideal $\J_f$ of partial derivatives of $f$ defined in \eqref{61}.

\section{Quasihomogeneous divisors}\label{27}

From now on we assume that $D$ is quasihomogeneous.
By definition, this means that $\O_D$ is a positively graded $\CC$-algebra in the sense of Scheja--Wiebe~(see \cite[\S3 Def.]{SW73}).
In more explicit terms, there are coordinates $x_1,\dots,x_{n+1}$ and weights 
\begin{equation}\label{5}
\deg(x_i)=:w_i\in\QQ_+,\quad i=1,\dots,n+1, 
\end{equation}
such that $D$ is defined by a weighted homogeneous polynomial $f$.
We shall order the weights increasingly by
\begin{equation}\label{26}
0<w_1\le w_2\le\cdots\le w_{n+1}.
\end{equation}
and denote by
\begin{equation}\label{14}
\chi:=\sum_{i=1}^{n+1}w_ix_i\frac{\p}{\p x_i}
\end{equation}
the corresponding logarithmic Euler derivation along $D$. 
Weighted homogeneity of degree $\deg(g)$ of an element $0\ne g\in\O_X$ can be expressed by the condition that
\begin{equation}\label{15}
[\chi,g]=\chi(g)=\deg(g)g
\end{equation}
where the commutator is taken in the ring of $\CC$-linear differential operators on $\O_X$.

\begin{dfn}
By an Euler derivation on a complex space germ $X$ we mean a derivation $\chi\in\Der_\CC(\O_X)$ such that $\mm_X$ is generated by eigenvectors of $\chi$ with eigenvalues in $\QQ_+$.
\end{dfn}

Being logarithmic along $D$ means that the ideal of $D$ is $\chi$-stable and hence generated by a weighted homogeneous $f$ (see \cite[(2.4)]{SW73}).
A logarithmic Euler derivation $\chi\in\Der_\CC(\O_X)$ descends to an Euler derivation $\ol\chi\in\Der_\CC(\O_D)$ and conversely such a $\bar\chi$ lifts to a such $\chi$ (see \cite[(2.1)]{SW73}).
It is the existence of $\ol\chi$, or equivalently that of $\chi$, that defines quasihomogeneity of $D$ (see \cite[(2.2), (2.3)]{SW73}).

\begin{rmk}\label{28}
There is a decomposition as $\O_X$-modules
\begin{equation}\label{60}
\Der_X(-\log D)=\O_X\chi\oplus\Der_X(-\log f).
\end{equation}
Indeed, any $\delta\in\Der_X(-\log D)$ can be written as $\delta=a\chi+\delta-a\chi$ where $\delta-a\chi\in\Der_X(-\log f)$ for $a=\frac{\delta(f)}{\chi(f)}\in\O_X$.
Moreover, $a\chi\in\Der_X(-\log f)$ implies $a\deg(f)f=a\chi(f)=0$ and hence $a=0$ for any $a\in\O_X$.
\end{rmk}

Quasihomogeneity of $D$ descends to any irreducible component of $D$ using a fixed $\chi$.
This is a particular case of a result of Seidenberg (see \cite[Thm.~1]{Sei67}).
For later use we record also the compatibility of quasihomogeneity with suspension.

\begin{lem}\label{49}\
\begin{enumerate}[(a)]
\item\label{49a} If $D$ is quasihomogeneous, then all its irreducible components are quasihomogeneous with respect to the same weights as $D$.
\item\label{49b} If $D$ is a suspension of $D'$ as in \eqref{46} then quasihomogeneity of $D$ is equivalent to that of $D'$.
The weights for $D$ on the coordinates of $X$ consist of the weights for $D'$ on the coordinates of $X'$ and arbitrary weights on coordinates of $X''$.
\end{enumerate}
\end{lem}

\begin{proof}
To show that quasihomogeneity passes from $D$ to $D'$, one restricts the corresponding logarithmic Euler derivation $\chi$ from \eqref{14} to a coordinate subspace $X'$ as in Remark~\ref{45}.
The equation of $D’$ and this restriction $\chi'$ is then obtained by setting the coordinates 
defining $X'$ equal to $0$.
\end{proof}

The assignment of weights \eqref{5} turns $\Der_\CC(\O_X)$ into a positively graded $\O_X$-module in the sense of Scheja and Wiebe (see \cite{SW73}).
A derivation $0\ne\delta\in\Der_\CC(\O_X)$ is weighted homogeneous of degree $\deg(\delta)$, if the coefficients $a_i$ of the expansion $\delta=\sum_{i=1}^{n+1}a_i\frac{\p}{\p x_i}$ are weighted homogeneous of degree $\deg(a_i)=\deg(\delta)+w_i$ for all $i=1,\dots,n+1$. 
In other words, 
\[
\deg(\delta(g))=\deg(\delta)+\deg(g)
\]
whenever $\delta(g)\ne0$ for a weighted homogeneous polynomial $g$. 
For example, the Euler derivation $\chi$ in \eqref{14} is weighted homogeneous of degree 
\begin{equation}\label{63}
\deg(\chi)=0
\end{equation}
for any assignment of weights.
Analogously to \eqref{15}, the weighted homogeneity of $\delta$ can be expressed by
\begin{equation}\label{19}
[\chi,\delta]=\deg(\delta)\delta.
\end{equation}
Indeed, for any weighted homogeneous polynomial $g$, we have that
\begin{equation}\label{16}
[\chi,\delta](g)=\chi(\delta(g))-\delta(\chi(g))=\left(\deg(\delta(g))-\deg(g)\right)\delta(g).
\end{equation}
Due to \eqref{15} and \eqref{19}, we prefer to speak of $\chi$-weights, $\chi$-homogeneity and the $\chi$-degree 
\[
\deg(-)=\deg_\chi(-)
\]
independently of coordinates.
For notational convenience, we set $\deg(0):=-\infty$.

A $\chi$-homogeneous generator $f$ of the ideal of $D$ is uniquely determined by $\chi$ up to a constant factor as a $\chi$-homogeneous element of minimal $\chi$-degree.
Then also the module $\Der_X(-\log f)$ in \eqref{55} is uniquely determined by $\chi$.

\section{Lie algebras of logarithmic derivations}\label{51}

It is known that $\Der_X(-\log D)$ is closed under the bracket operation $[-,-]$ (see \cite[(1.5) ii)]{Sai80}).
In particular, $\chi$ acts on $\Der_X(-\log D)$ turning it into a positively graded $\O_X$-module in the sense of Scheja and Wiebe (see \cite{SW73}).
Since $f$ is weighted homogeneous the same holds true for its ideal of partials $\J_f$ and its syzygy module $\Der_X(-\log f)$.

\begin{ntn}\label{25}
For any $\CC[\chi]$-module $M$ and $e\in\QQ$, we denote by $M_e=M_{\chi=e}$ the $\CC[\chi]$-submodule generated by all $\chi$-homogeneous elements $m\in M$ of $\chi$-degree $\deg(m)=e$.
The submodules $M_{\le e}=M_{\chi\le e}$, $M_{<e}=M_{\chi<e}$, etc.~are defined analogously.
\end{ntn}

For any two $\chi$-homogeneous $\delta,\eta\in\Der_X(-\log D)$ with $[\delta,\eta]\ne0$, also their commutator $[\delta,\eta]$ is $\chi$-homogeneous of degree
\begin{equation}\label{21}
\deg([\delta,\eta])=\deg(\delta)+\deg(\eta).
\end{equation}
This follows by definition, or from \eqref{19} and the Jacobi identity
\begin{equation}\label{37}
[\chi,[\delta,\eta]]=[[\chi,\delta],\eta]+[\delta,[\chi,\eta]].
\end{equation}
Since $\dim_\CC(\O_{X,\le e})<\infty$ for any $e\in\QQ_+$ due to the positivity of weights \eqref{5},
\begin{align}\label{62}
\aa&=\aa_\chi:=\Der_X(-\log f)_{\le0}\subset\\
\nonumber\dd&=\dd_\chi:=\Der_X(-\log D)_{\le0}\subset\Der_\CC(\O_X)_{\le0}=\bigoplus_{i=1}^{n+1}\O_{X,\le w_i}\frac\p{\p x_i}
\end{align}
are finite dimensional complex Lie algebras depending on $\chi$.
By \eqref{60}, \eqref{63}, \eqref{19} and \eqref{37}, $\dd$ decomposes as
\begin{equation}\label{23}
\dd=\CC\chi\ltimes\aa
\end{equation}
and 
\begin{equation}\label{24}
\aa=\dd\cap\Der_X(-\log f)\subset\dd
\end{equation}
is the Lie subalgebra of $\dd$ of derivations annihilating $f$.
We shall be interested in the condition that $\Der_X(-\log D)$ admits $\chi$-homogeneous $\O_X$-generators of non-positive $\chi$-degree, that is, 
\begin{equation}\label{36}
\O_X\dd_\chi=\Der_X(-\log D).
\end{equation}
Let $\{\delta_2,\dots,\delta_r\}$ be a minimal set of $\chi$-homogeneous generators of $\Der_X(-\log f)$ of $\chi$-degree
\[
\deg(\delta_i)=:d_i.
\]
Then condition~\eqref{36} is equivalent to $\delta_i\in\dd$ or to $d_i\le0$ for all $i=1,\dots,n+1$.
Setting $\delta_1:=\chi$ and $d_1:=\deg(\chi)=0$,
\[
\Delta=\{\delta_1,\dots,\delta_r\}
\]
becomes a minimal set of $\chi$-homogeneous generators of $\Der_X(-\log D)$ by \eqref{60}.
If $D$ is a free divisor germ, then $r=n+1$, $\Delta$ is a $\chi$-homogeneous basis of $\Der_X(-\log D)$ and the $\chi$-degree of any entry $\lambda_{i,j}\ne0$ of the transposed Saito matrix $\Lambda$ from Remark~\ref{48} equals 
\begin{equation}\label{11}
\deg(\lambda_{i,j})=d_i + w_j.
\end{equation}
The defining function $f$ of $D$ given by Saito's criterion as in \eqref{56} is then $\chi$-homogeneous of $\chi$-degree
\begin{equation}\label{57}
\deg(f)=\sum_{k=1}^{n+1}d_k+w_k.
\end{equation}

\begin{prp}\label{20}
Let $D$ be an irreducible unsuspended quasihomogeneous free divisor germ which is normal crossing in codimension one, and let $\chi$ be any logarithmic Euler derivation along $D$.
Then the module $\Der_X(-\log D)$ admits a $\chi$-homogeneous basis whose elements have non-positive $\chi$-degree.
In other words, $D$ satisfies condition~\eqref{36} for any choice of logarithmic Euler derivation $\chi$ along $D$.
\end{prp}

\begin{proof}
By hypothesis of being unsuspended $D$ satisfies condition~\eqref{18} and hence $\deg(M_{i,j})\ne-\infty$ by Lemma~\ref{43}.
Using equalities~\eqref{11} and \eqref{57}, we obtain
\begin{align}\label{2}
\deg(M_{i,j}) 
&=\sum_{k=1}^{n+1}(d_k+w_k)-(d_i+w_j) \\
&=\deg(f)-w_j-d_i\nonumber\\
&=\deg\left(\frac{\partial f}{\partial x_j}\right)-d_i.\nonumber
\end{align}
By Proposition~\ref{34}, there is an equality of ideals 
\[
\ideal{M_{i,j}\mid i,j=1,\dots,n+1}_{\O_X}=\ideal{\frac{\partial f}{\partial x_k}\mid k=1,\dots,n+1}_{\O_X}.
\]
We shall use that the least $\chi$-degree of a system of generators is an invariant of this ideal.
Setting $k=n+1$ minimizes $\deg(\frac{\partial f}{\partial x_k})=\deg(f)-w_k$ due to the increasing order of weights in \eqref{26}.
It follows that
\begin{equation}\label{58}
\deg(M_{i,j})\ge\deg\left(\frac{\partial f}{\partial x_{n+1}}\right)
\end{equation}
for all $i,j=1,\dots,n+1$.
Combining \eqref{2} and \eqref{58} for $j=n+1$ now yields
\[
\deg\left(\frac{\partial f}{\partial x_{n+1}}\right)-d_i
=\deg(M_{i,n+1})\ge\deg\left(\frac{\partial f}{\partial x_{n+1}}\right)
\]
and hence $d_i\le 0$ for all $i=1,\dots,n+1$ as claimed.
\end{proof}

\section{Representation on lowest weight variables}\label{17}

For notational convenience, we shall abbreviate 
\[
\mm:=\mm_X,\quad\DD:=\Der_X(-\log D),\quad\AA:=\Der_X(-\log f).
\]
Then $\dd=\DD_{\le0}$ and $\aa=\AA_{\le0}$ (see \eqref{62}).
In particular, $\dd_0=\DD_0$ and $\aa_0=\AA_0$.

From now on we assume that $D$ is unsuspended, that is, \eqref{18} holds true.
Then $\mm$ and hence, by the Leibniz rule, also $\mm^2$ is a $\DD$-module.
The canonical $\O_X$-module homomorphism
\[
\pi\colon\mm\onto\mm/\mm^2=:\bar\mm
\]
becomes a $\DD$-module homomorphism.
Splittings of $\pi$ can be seen as intrinsic versions of coordinate systems. 
For any $e\in\QQ_+$, $\pi$ induces a $\dd_0$-module homomorphism (see Notation~\ref{25})
\begin{equation}\label{64}
\pi_e\colon\mm_e\onto\bar\mm_e.
\end{equation}
Splittings of $\pi$ compatible with the $\pi_e$ can be seen as $\chi$-homogeneous coordinate systems.
For any $e\in\QQ_+$, $\mm_e\subset\mm_{\le e}$ and $\bar\mm_e\subset\bar\mm_{\le e}$ are homomorphisms of modules over $\dd_0\subset\dd$.
Note that $\dim_\CC(\mm_{\le e})<\infty$ due to the positivity of weights \eqref{5}.

For any $e\in\QQ_+$ with $w_1\le e$, there is a commutative diagram of $\dd$-modules
\begin{equation}\label{35}
\xymatrix{
\mm\ar@{->>}[r]^-\pi & \bar\mm\\
\mm_{\le e}\ar@{->>}[r]^-{\pi_{\le e}}\ar@{^(->}[u] & \bar\mm_{\le e}\ar@{^(->}[u]\\
\mm_{w_1}\ar@{^(->}[u]\ar@{^(->>}[r]^-{\pi_{w_1}} & \bar\mm_{w_1}\ar@{^(->}[u]
}
\end{equation}
where $\pi_{w_1}$ is a bijection that we consider as an equality and
\[
\mm'=\mm'_\chi:=\mm_{w_1}=\mm_{\le w_1}=\bar\mm_{w_1}=\bar\mm_{\le w_1}
\]
is the $\CC$-vector space of lowest weight variables.
After reordering coordinates, we may assume that $x_1,\dots,x_k$ is a $\CC$-basis of $\mm'$.
In particular,
\begin{equation}\label{65}
w_1=\dots=w_k. 
\end{equation}

\begin{ntn}\label{3}
For any Lie subalgebra $\fl\subset\dd_\chi$ we denote by $\bar\fl=\bar\fl_\chi$ and  $\fl'=\fl'_\chi$ the image of $\fl$ in $\gl_\CC(\bar\mm)$ and $\gl_\CC(\mm')$ respectively.
Note that $\fl'=\fl_0'$ by definition of $\mm'$.
\end{ntn}

From now on we assume that condition~\eqref{36} is satisfied for some $\chi$, that is,
\[
\delta_2,\dots,\delta_r\in\aa
\]
for any minimal set of $\chi$-homogeneous $\O_X$-generators $\{\delta_2,\dots,\delta_r\}$ of $\AA$ as in \S\ref{51}.

Any $\delta\in\dd$ can be written as $\delta=\sum_{i=1}^{n+1}a_i\frac{\p}{\p x_i}$ where $a_1,\dots,a_k$ must be linear functions of $x_1,\dots,x_k$ for degree reasons, and it maps to 
\begin{equation}\label{53}
\delta'=\sum_{i=1}^{k}a_i\frac{\p}{\p x_i}\in\dd'.
\end{equation}
If $\delta\in(\mm\DD)_{\le0}$ then $\delta\in\mm^2\Der_\CC(\O_X)$ due to hypothesis~\eqref{18} of $D$ being unsuspended.
But then the linear coefficients of $\delta'$ and hence $\delta'$ itself must be zero.
Since $\delta_1=\chi,\delta_2\dots,\delta_r\in\dd$ generate $\DD$ as $\O_X$-module, the inclusion in the diagram of $\CC$-linear maps
\[
\xymatrixcolsep{0em}\xymatrix{
\dd\ar@{>>}[rr]\ar[dr] && \DD/\mm\DD\\
& \dd/(\mm\DD)_{\le0}\ar@{^(->}[ru]
}
\]
is an equality.
By the preceding arguments, $\dd\onto\dd'$ factors through the resulting surjection 
\[
\dd\onto\DD/\mm\DD=\dd/(\mm\DD)_{\le0}.
\]
Arguing analogously for $\aa\onto\aa'$, we obtain
\[
\xymatrix{
\dd\ar@{>>}[r]\ar@{>>}[d] & \dd'\\
\DD/\mm\DD\ar@{=}[r] & \dd/(\mm\DD)_{\le0},\ar@{>>}[u]
}\quad
\xymatrix{
\aa\ar@{>>}[r]\ar@{>>}[d] & \aa'\\
\AA/\mm\AA\ar@{=}[r] & \aa/(\mm\AA)_{\le0}.\ar@{>>}[u]
}
\]
In explicit terms this means that 
\begin{equation}\label{52}
\dd'=\ideal{\delta'_1,\dots,\delta'_r}_\CC,\quad
\aa'=\ideal{\delta'_2,\dots,\delta'_r}_\CC.
\end{equation}

In case $D$ is a free divisor germ, \eqref{53} and \eqref{52} show that $\dd'$ and $\aa'$ are represented as blocks of a Saito matrix with linear entries.
Indeed, the transposed Saito matrix from Remark~\ref{48} can be visualized as
\begin{equation}\label{30}
\Lambda=
\left[
\begin{array}{c|c}
\dd' & * 
\end{array}
\right]=
\left[
\begin{array}{c|c}
w_1x_1\cdots w_kx_k & * \\
\hline
\aa' & * 
\end{array}
\right]
\end{equation}
where by abuse of notation $\dd'$ and $\aa'$ represent the coefficient matrices of $\delta_1,\dots,\delta_{n+1}$ and $\delta_2,\dots,\delta_{n+1}$ with respect to $\frac\p{\p x_1},\dots,\frac\p{\p x_k}$.
 
\begin{prp}\label{31}
Let $D$ be a quasihomogeneous free divisor germ, and let $\chi$ be a logarithmic Euler derivation along $D$.
Suppose that $D$ satisfies both condition~\eqref{18} of being unsuspended and condition~\eqref{36} of admitting non-positive $\chi$-degree generators of logarithmic derivations.
If $\aa'=\aa'_\chi$ is solvable then $D$ has a smooth irreducible component $D'$.
\end{prp}

\begin{proof}
By Lie's theorem, we may assume that $x_1$ is a common eigenvector for $\aa'$.
This means that the first column of $\Lambda$ in \eqref{30} consists of constant multiples of $x_1$ and hence $x_1$ divides $\det(\Lambda)$.
By Saito's criterion (see Remark~\ref{48}), $f=\det(\Lambda)$ defines $D$ and hence $\{x_1=0\}$ is a smooth irreducible component of $D$.
\end{proof}

\begin{lem}\label{32}
Let $\ss'\subset\dd'$ be a semisimple Lie subalgebra.
\begin{enumerate}[(a)]
\item\label{32a}
Any semisimple Lie subalgebra $\ss'\subset\dd'$ admits a lift $\ss\subset\aa_0$.
\item\label{32b} For any $e\in\QQ_+$, there is an $\ss$-module splitting of $\pi_e$ in \eqref{64}.
\end{enumerate}
\end{lem}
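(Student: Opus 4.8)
The plan is to derive both parts from the standard structure theory of finite‑dimensional complex Lie algebras (Levi--Malcev for \ref{32a}, Weyl complete reducibility for \ref{32b}), the only genuine subtlety being that the lift in \ref{32a} must be produced inside the degree‑zero annihilator $\aa_0$ and not merely inside $\dd$. To set this up I would first record that $\ss'$ automatically lies in $\aa'$: the image $\chi'$ of $\chi$ under $\dd\onto\dd'$ acts on $\mm'=\mm_{w_1}$ as the scalar $w_1$, hence is central in $\dd'\subset\gl_\CC(\mm')$; since $\dd'=\CC\chi'+\aa'$ by \eqref{23}, this forces $[\dd',\dd']=[\aa',\aa']\subset\aa'$, and semisimplicity gives $\ss'=[\ss',\ss']\subset\aa'$.

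The same centrality shows that $\aa_{<0}$ dies in $\aa'$: for a $\chi$‑homogeneous $\delta\in\aa$ of $\chi$‑degree $e$ one has $0=[\chi',\delta']=e\,\delta'$ in $\dd'$, so $\delta'=0$ whenever $e\ne0$. By \eqref{36} the Lie algebra $\aa=\aa_0\oplus\aa_{<0}$ is non‑positively graded, with $\aa_{<0}$ spanned by $\chi$‑homogeneous elements of negative degree; hence the surjection $\aa\onto\aa'$ coming from \eqref{52} factors through the finite‑dimensional subalgebra $\aa_0$, giving a surjection $\phi\colon\aa_0\onto\aa'$.

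Now I would lift $\ss'$ through $\phi$. Replacing $\aa_0$ by $\phi^{-1}(\ss')$ reduces to the case $\aa'=\ss'$; then $\phi$ carries the radical onto $\mathrm{rad}(\ss')=0$, so $\mathrm{rad}(\phi^{-1}(\ss'))\subset\ker\phi$, and any Levi subalgebra $\mathfrak{L}$ of $\phi^{-1}(\ss')$ already surjects onto $\ss'$. The kernel $\mathfrak{L}\cap\ker\phi$ is an ideal of the semisimple algebra $\mathfrak{L}$, hence has a complementary ideal $\ss\subset\mathfrak{L}\subset\aa_0$, and $\phi$ maps this semisimple $\ss$ isomorphically onto $\ss'$. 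Since $\ss\subset\aa_0\subset\dd$, restriction of scalars turns the $\dd$‑module diagram \eqref{35} into a diagram of $\ss$‑modules, proving \ref{32a}.

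For \ref{32b}, note that $\ss\subset\aa_0\subset\dd_0$ preserves $\chi$‑degree, so it acts on $\mm_w$ and on $\bar\mm_w$ and the surjection $\pi_w\colon\mm_w\onto\bar\mm_w$ is a map of $\ss$‑modules; moreover $\mm_w\subset\mm_{\le w}$ is finite‑dimensional by \eqref{26}. Weyl's complete reducibility theorem then provides an $\ss$‑stable complement to $\ker\pi_w$ in $\mm_w$, and the inverse of $\pi_w$ on this complement is the desired $\ss$‑module splitting. The \textbf{main obstacle} is exactly the placement of the lift in $\aa_0$: applying a lifting theorem naively to $\dd\onto\dd'$ would only yield $\ss\subset\dd$, and the argument genuinely needs both that $\chi'$ is scalar (so $\ss'\subset\aa'$) and that the negatively graded part $\aa_{<0}$ acts trivially on $\mm'$ (so the lift stays inside $\aa_0$); once these are in place, Levi--Malcev and Weyl finish the proof routinely.
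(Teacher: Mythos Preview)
Your proof is correct and follows essentially the same route as the paper: both first observe $\ss'=[\ss',\ss']\subset\aa'$, then lift through $\aa_0\onto\aa'$ via Levi decomposition, and deduce \ref{32b} from Weyl's complete reducibility. You are more explicit than the paper about why $\aa_0\onto\aa'$ is surjective (your observation that $\chi'$ is scalar forces $\aa_{<0}$ to die in $\aa'$ is exactly the point the paper leaves implicit), and your packaging of the lifting step---taking a Levi subalgebra of $\phi^{-1}(\ss')$ and splitting off the kernel as a semisimple ideal---is a routine variant of the paper's argument via the split surjection $\aa_0/r(\aa_0)\onto\aa'/r(\aa')$.
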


\begin{proof}\pushQED{\qed}\
\begin{asparaenum}[(a)]
\item By \eqref{23}, $\ss'=[\ss',\ss']\subset[\dd',\dd']=[\dd,\dd]'\subset\aa'$.
Denoting by $r(-)$ the solvable radical, $\ss'\cap r(\aa')$ is a solvable ideal in $\ss'$ and hence zero.
It follows that $\ss'\subset\aa'/r(\aa')$.
By Weyl's complete reducibility theorem, any finite dimensional module over semisimple Lie algebra is semisimple. 
In particular, any epimorphism of such modules splits.
Thus, $\aa_0/r(\aa_0)\onto\aa'/r(\aa')$ splits by semisimplicity of $\aa_0/r(\aa_0)$.
The image of $\ss'$ under this splitting composed with a Levi splitting $\aa_0/r(\aa_0)\into\aa_0$ is the desired lift $\ss$.
\item By part~\eqref{32a}, $\ss\subset\aa_0$ commutes with $\chi$ and hence $\pi_e$ is an epimorphism of $\ss$-modules.
Since $\ss$ is semisimple the desired splitting exists due to Weyl's complete reducibility theorem.\qedhere
\end{asparaenum}
\end{proof}

Speaking in terms of variables, Lemma~\ref{32}.\eqref{32b} associates to each $x_i$ a new variable $x_i'\in \mm_{w_i}$ namely the image of $\bar x_i$ under the splitting map. 
By definition of $\mm_{w_i}$ (see \eqref{15} and Notation~\ref{25}), we have $\chi(x_i')=w_ix_i'$ for all $i=1,\dots,n+1$.
Thus
\[
\chi=\sum_{i=1}^{n+1}w_ix_i'\frac{\p}{\p x_i'}
\]
retains the form of \eqref{14} in the coordinates $x_1',\dots,x_{n+1}'$ where $\deg(x_i')=w_i$ for $i=1,\dots,n+1$.
In other words, homogeneity is unaffected by the coordinate change.
Considering the isomorphism $\pi_{w_1}$ in \eqref{35} as an equality we have $x_i=x_i'$ for all $x_i\in\mm'$. 
In particular,
\[
\ideal{x_1',\dots,x_k'}_\CC=\mm'=\ideal{x_1,\dots,x_k}_\CC.
\]
In other words, the first $k$ variables generate the space of lowest weight variables in both coordinate systems.
However, an additional feature of the coordinates $x_1',\dots,x_{n+1}'$ is that $\ss$ acts on the coordinate  space $\ideal{x_1',\dots,x_{n+1}'}_\CC$.
This is what we mean by saying that $\ss$ acts linearly in terms of the coordinates $x_1',\dots,x_{n+1}'$.

\begin{prp}\label{33}
Let $D$ be a quasihomogeneous divisor germ satisfying condition~\eqref{18} of being unsuspended and condition~\eqref{36} of admitting non-positive $\chi$-generators of logarithmic derivations for any choice of logarithmic Euler derivation $\chi$ along $D$.
Then $\aa'_\chi$ is solvable for a suitable choice of $\chi$.
\end{prp}

\begin{proof}
By quasihomogeneity of $D$ there exists a logarithmic Euler derivation $\chi$ along $D$.
As before, we write $\aa'=\aa'_\chi$ for the representation of $\aa=\aa_\chi$ on $\mm'=\mm_\chi'$.
Suppose $\aa'$ is not solvable.
We shall describe a procedure that gradually modifies $\chi$ to decrease $\dim_\CC(\mm')$ until $\aa'$ becomes solvable.
Being not solvable, $\aa'$ must contain a semisimple Lie subalgebra $\ss'$. 
By Lemma~\ref{32}, $\ss'$ lifts to a semisimple Lie subalgebra $\ss\subset\aa_0$ acting linearly in terms of suitable coordinates $x_1,\dots,x_{n+1}$.
That is, $\ss$ acts on $\bar\mm=\ideal{x_1,\dots,x_{n+1}}_\CC$ and it acts faithfully on its subspace $\mm'=\ideal{x_1,\dots,x_k}_\CC$.
By the structure theory of complex semisimple Lie algebras, $\ss$ contains a Lie subalgebra isomorphic to $\sl_2(\CC)$.
We may assume that $\ss\cong\sl_2(\CC)$ which is then generated by an $\sl_2$-triple.
By the representation theory of $\sl_2$, the semisimple generator $\sigma$ in this triple has both positive and negative integer eigenvalues on the faithful $\ss$-module $\mm'$.
Since $\ss\subset\aa_0\subset\AA=\Der_X(-\log f)$, $\sigma$ commutes with $\chi$ and can be seen as a logarithmic derivation along $D$ annihilating $f$ (see \eqref{55}, \eqref{19}, \eqref{24} and Notation~\ref{25}).
After a linear change of coordinates, we may assume that $x_1,\dots,x_{n+1}$ are eigenvectors of both $\chi$ and $\sigma$. 
For small $\epsilon\in\QQ_+$,
\[
\tilde\chi:=\chi+\epsilon\sigma=\sum_{i=1}^{n+1}\tilde w_ix_i\frac\p{\p x_i}
\]
then takes the form of \eqref{14} for some $\tilde w_i\in\QQ_+$ such that $\tilde w_i<\tilde w_j$ for any $i\le k$ and $k+1\le j$.
That is, $\tilde\chi$ is another logarithmic Euler derivation along $D$ with 
\[
0\ne\mm_{\tilde\chi}'\subsetneq\mm_\chi'
\]
by \eqref{65}.
As illustrated in Figure~\ref{38}, this condition can be achieved also by a less restrictive choice of $\epsilon$ where $\tilde w_i\ge\tilde w_j$ for some $i\le k$ and $k+1\le j$.
Since $\aa'$ is solvable if $k=\dim_\CC(\mm')=1$, repeatedly passing from $\chi$ to $\tilde\chi$ results in a logarithmic Euler derivation $\chi$ for which $\aa'_\chi$ is solvable.
\end{proof}

\begin{figure}[h]
\begin{tikzpicture}[scale=0.05]
\draw[->] (-10,0) -- (-10,80);
\draw[->] (-10,0) -- (200,0);

\foreach \i in {1,...,19} \draw (\i*10,1)--(\i*10,-1) node[below] {\small$x_{\i}$};

\draw[dashed] (10,15)--(40,15);
\draw[dashed] (70,20)--(90,20);
\draw[dashed] (80,30)--(100,30);
\draw[dashed] (30,35)--(150,35);

\draw (-11,25)--(-9,25) node[left] {\small$w_1$};
\draw (-11,15)--(-9,15) node[right] {\small$\tilde w_1$};
\draw (10,25)--(80,25);
\draw[dotted] (10,15)--(30,35);
\foreach \i in {0,...,2} \draw (10+\i*10,15+\i*10) node[shape=circle,draw,fill=white,scale=0.5] {};
\draw[dotted] (40,15)--(60,35);
\foreach \i in {0,...,2} \draw (40+\i*10,15+\i*10) node[shape=circle,draw,fill=white,scale=0.5] {};
\draw[dotted] (70,20)--(80,30);
\foreach \i in {0,...,1} \draw (70+\i*10,20+\i*10) node[shape=circle,draw,fill=white,scale=0.5] {};
\draw (10,25)--(50,25);
\foreach \i in {0,...,7} \draw (10+\i*10,25) node[shape=circle,draw,fill=black,scale=0.3] {};

\draw (-11,45)--(-9,45) node[left] {\small$w_2$};
\draw (-11,20)--(-9,20) node[right] {\small$\tilde w_2$};
\draw (90,45)--(190,45);
\draw[dotted] (90,20)--(140,70);
\foreach \i in {0,...,5} \draw (90+\i*10,20+\i*10) node[shape=circle,draw,fill=white,scale=0.5] {};
\draw[dotted] (150,35)--(170,55);
\foreach \i in {0,...,2} \draw (150+\i*10,35+\i*10) node[shape=circle,draw,fill=white,scale=0.5] {};
\foreach \i in {0,...,1} \draw (180+\i*10,45) node[shape=circle,draw,fill=white,scale=0.5] {};
\foreach \i in {0,...,10} \draw (90+\i*10,45) node[shape=circle,draw,fill=black,scale=0.3] {};

\draw (-11,30)--(-9,30) node[right] {\small$\tilde w_3$};
\draw (-11,35)--(-9,35) node[right] {\small$\tilde w_4$};
\draw (-5,45) node {\small$\vdots$};
\draw (-15,55) node {\small$\vdots$};

\end{tikzpicture}
\caption{An example modification of weights as in Proposition~\ref{33}.}\label{38}
\end{figure}
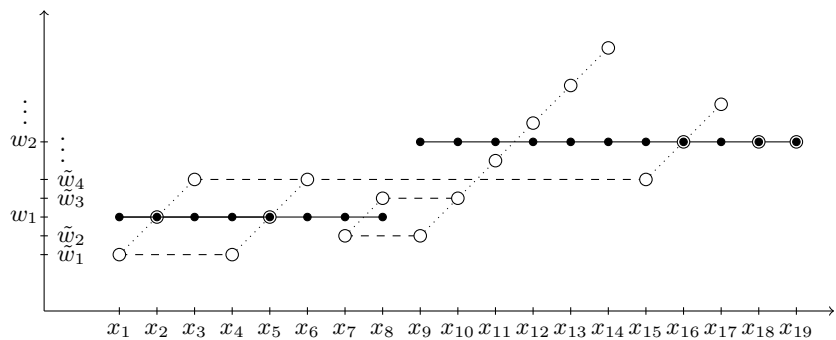

\section{Proof of the main result}

\begin{proof}[Proof of Theorem~\ref{0}]
Let $D$ be a quasihomogeneous free divisor germ that is normal crossing in codimension one.
By Theorem~\ref{39} it suffices to show that all irreducible components of $D$ are smooth.
Due to Proposition~\ref{42} and Lemma~\ref{49}.\eqref{49a}, these irreducible components are again quasihomogeneous, free and normal crossing in codimension one.
By Lemmas~\ref{47} and \ref{49}.\eqref{49b} the latter properties are also invariant under suspension.
We may therefore assume that $D$ is irreducible and unsuspended.
Hence Proposition~\ref{20} applies to show that, for any logarithmic Euler derivation $\chi$ along $D$, the module $\Der_X(-\log D)$ admits a $\chi$-homogeneous basis whose elements have non-positive $\chi$-degree.
Recall the complex Lie algebra $\aa'=\aa_\chi'$ defined by setting $\fl:=\aa$ in Notation~\ref{3} with $\aa=\aa_\chi$ as defined in \eqref{62}.
By Proposition~\ref{33} $\aa'$ is solvable for a suitable $\chi$.
Thus, Propositions~\ref{31} yields smoothness of $D$ as desired.
In fact, after the preceding reductions, we must have $D=\{0\}\subset(\CC,0)$ by Remark~\ref{45} since $D$ is both unsuspended and smooth.
\end{proof}

\bibliographystyle{amsalpha}
\bibliography{fdnc}
\end{document}